\newcommand\Var{\operatorname{Var}}
\newcommand\R{{\mathbf{R}}}
\newcommand\N{{\mathbf{N}}}
\renewcommand\P{{\mathbf{P}}}
\newcommand\E{{\mathbf{E}}}
\newcommand\tr{\operatorname{tr}}
\newcommand\I{{\mathbf{I}}}
\newcommand\eps{\varepsilon}
\newcommand\Ba{\mathbf{a}}
\newcommand\Bb{\mathbf{b}}
\newcommand\Be{\mathbf{e}}
\newcommand\Bv{\mathbf{v}}
\newcommand\Bw{\mathbf{w}}
\newcommand\Bx{\mathbf{x}}
\newcommand\BI{\mathbf{I}}
\newcommand\BN{\mathbf{N}}
\newcommand\CE{\mathcal{E}}
\newcommand{\eqref}[1]{(\ref{#1})}
\newtheorem{theorem}[subsection]{Theorem}
\newtheorem{lemma}[subsection]{Lemma}
\begin{document}
\begin{frontmatter}

\title{Random matrices: Law of the determinant}
\runtitle{Law of the determinant}

\begin{aug}
\author[A]{\fnms{Hoi H.} \snm{Nguyen}\corref{}\ead[label=e1]{nguyen.1261@math.osu.edu}}
\and
\author[B]{\fnms{Van} \snm{Vu}\ead[label=e2]{van.vu@yale.edu}}
\runauthor{H. Nguyen and V. Vu}
\affiliation{University of Pennsylvania and Yale University}
\address[A]{Department of Mathematics\\
Ohio State University\\
231 West 18th Avenue\\
Columbus, Ohio 43210\\
USA\\
\printead{e1}}
\address[B]{Department of Mathematics\\
Yale University\\
New Haven, Connecticut 06520\\
USA\\
\printead{e2}}
\end{aug}

\received{\smonth{11} \syear{2011}}
\revised{\smonth{6} \syear{2012}}

%
\begin{abstract}
Let $A_n$ be an $n$ by $n$ random matrix whose entries are independent
real random variables with mean zero, variance one and with
subexponential tail. We show that
the logarithm of $|\det A_n|$ satisfies a central limit theorem. More precisely,
\begin{eqnarray*}
&&\sup_{x\in\R} \biggl|\P \biggl(\frac{\log(|\det A_n|)- ({1}/{2}) \log
(n-1)!}{\sqrt{({1}/{2}) \log n}}\le x \biggr) -\P\bigl(\BN(0,1)
\le x \bigr) \biggr| \\
&&\qquad\le\log^{-{1}/{3} +o(1)} n.
\end{eqnarray*}
\end{abstract}

%
\begin{keyword}[class=AMS]
\kwd{60B20}
\kwd{60F05}
\end{keyword}
\begin{keyword}
\kwd{Random matrices}
\kwd{random determinant}
\end{keyword}

\end{frontmatter}

\section{Introduction}\label{sectionintroduction}

Let $A_n$ be an $n$ by $n$ random matrix whose entries $a_{ij}, 1\le
i,j\le n$, are independent real random variables of zero mean and unit variance.
We will refer to the entries $a_{ij}$ as the \textit{atom} variables.

As determinant is one of the most fundamental matrix functions,
it is a basic problem in the theory of random matrices to study the distribution
of $\det A_n$ and indeed this study has a long and rich history.
The earliest paper we find on the subject is a paper of Szekeres and
Tur\'an \cite{SzT} from 1937, in which they studied an extremal
problem. In the 1950s, there is
a series of papers \cite{FT,NRR,Turan,Pre} devoted to the computation
of moments of fixed orders of $\det A_n$ (see also \cite{Gbook}). The
explicit formula for higher moments gets very complicated and is in
general not available, except in the case when the atom variables have
some special distribution (see, e.g., \cite{Dembo}).

One can use the estimate for the moments and Markov inequality to
obtain an upper bound on $|\det A_n |$. However, no lower bound was
known for a long time. In particular, Erd\H{o}s asked whether $\det
A_n$ is nonzero with probability tending to one. In 1967, Koml\'os
\cite{Kom,Kom1} addressed this question, proving that almost surely
$|\det A_n | > 0$ for random Bernoulli matrices (where the atom
variables are i.i.d. Bernoulli, taking values $\pm1$ with probability
$1/2$). His method also works for
much more general models. Following \cite{Kom}, the upper bound on the
probability that
$\det A_n =0$ has been improved in \cite{KKS,TVdet,TVsing,BVW}.
However, these results do not say much about the value of $|\det A_n |$ itself.

In a recent paper \cite{TVdet}, Tao and the second author proved that
for Bernoulli random matrices,
with probability tending to one (as $n$ tends to infinity),
%
\begin{equation}
\label{TVlow} \sqrt{n !} \exp( -c \sqrt{ n \log n} ) \le|\det A_n |
\le \sqrt{n!} \omega(n)
\end{equation}
for any function $\omega(n)$ tending to infinity with $n$. This shows
that almost surely, $\log|\det A_n |$ is $ (\frac{1}{2} +o(1) )n \log
n$, but does not provide any distributional information. For related
works concerning other models of random matrices, we refer to~\cite{Ro}.

In \cite{Goodman}, Goodman considered random Gaussian matrices where
the atom variables are i.i.d. standard Gaussian variables. He noticed that
in this case the determinant is a product of independent Chi-square
variables. Therefore, its logarithm is the sum of independent variables
and, thus, one expects
a central limit theorem to hold. In fact, using properties of
Chi-square distribution, it is not very hard to prove
%
\begin{equation}
\label{Girko} \frac{\log(|\det A_n|)- ({1}/{2})
\log(n-1)!}{\sqrt{({1}/{2}) \log n}} \rightarrow\BN(0,1).
\end{equation}

We refer the reader to \cite{RW}, Section 4, for further discussion on
this model.

In \cite{G2}, Girko stated that \eqref{Girko} holds
for general random matrices under the additional assumption
that the fourth moment of the atom variables is 3. Twenty years later,
he claimed a
much stronger result which replaced the above assumption by the
assumption that the atom variables have bounded $(4 +\delta)$th moment
\cite{G}. However, there are points which are not clear in these
papers and we have not found any researcher who can explain the whole
proof to us.
In our own attempt, we could not pass the proof of Theorem 2 in \cite
{G}. In particular, definition (3.7) of this paper requires the matrix
$\Xi\bigl({1 \atop k}\bigr) $ to be invertible, but this assumption can easily fail.

In this paper, we provide a transparent proof for the central limit
theorem of
the log-determinant. The next question to consider, naturally, is the
rate of convergence.
We are able to obtain a rate which we believe to be
near optimal.

We say that a random variable $\xi$ satisfies condition \textup{C0}
(with positive constants $C_1, C_2$) if
%
\begin{equation}
\label{eqntailbound} P\bigl(|\xi|\ge t\bigr) \le C_1\exp\bigl(-t^{C_2}
\bigr)
\end{equation}
for all $t>0$.

\begin{theorem}[(Main theorem)]\label{theoremmain}
Assume that all atom variables $a_{ij}$ satisfy condition \textup{C0}
with some positive constants $C_1, C_2$. Then
%
\begin{equation}
\label{lim1}\qquad\quad\!\!  \sup_{x\in\R} \biggl|\P \biggl(\frac{\log(|\det A_n|)- ({1}/{2})
\log(n-1)!}{\sqrt{({1}/{2} )\log n}}\le x \biggr) -
\Phi(x) \biggr| \le\log^{-1/3 +o(1)} n.
\end{equation}
\end{theorem}

Here and later, $\Phi(x) = \P( \N(0,1) < x) = \frac{1}{\sqrt{2\pi
}} \int_{-\infty}^x \exp(-t^2/2) \,dt$.
In the remaining part of the paper, we will actually prove the
following equivalent form:
%
\begin{equation}
\label{lim2}\quad  \sup_{x\in\R} \biggl|\P \biggl(\frac{\log(\det A_n^2)- \log
(n-1)!}{\sqrt{2\log n}}\le x \biggr) -
\Phi(x) \biggr| \le\log^{-1/3
+o(1)} n.
\end{equation}

The reader is invited to consult Figure \ref{fig1} for our simulation.
To give some feeling about \eqref{lim2}, let us consider the case when
$a_{ij}$ are i.i.d. standard Gaussian. For $0\le i \le n-1$, let $V_i$ be
the subspace generated by the first $i$ rows of $A_n$. Let $\Delta_{i+1}$ denote the
distance from $\Ba_{i+1}$ to $V_{i}$, where $\Ba_{i+1}=(a_{i+1,1},\ldots, a_{i+1,n})$ is the $(i+1)$th row vector of
$A_n$. Then, by the ``base times height'' formula, we have
%
\begin{equation}
\label{eqnbase-times-height} \det A_n^2=\prod
_{i=0}^{n-1} \Delta_{i+1}^2.
\end{equation}

\begin{figure}

\includegraphics{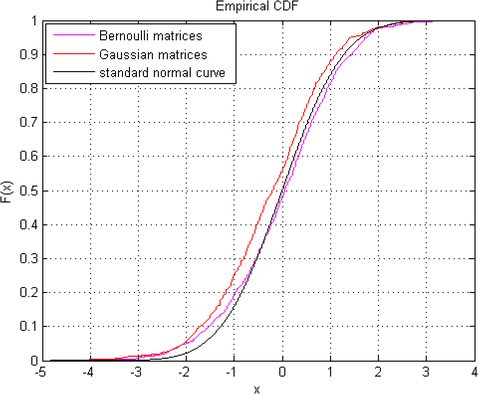}

\caption{The plot compares the distributions of $(\log(\det A^2)-\log
(n-1)!)/\sqrt{2\log n}$
for random Bernoulli matrices, random Gaussian matrices and $\N(0,1)$.
We sampled 1000 matrices of size 1000 by 1000 for each ensemble.}\label{fig1}
\end{figure}

Therefore,
%
\begin{equation}
\label{eqnbase-times-height1} \log\det A_n^2=\sum
_{i=0}^{n-1} \log\Delta_{i+1}^2.
\end{equation}

As the $a_{ij}$ are i.i.d. standard Gaussian, $\Delta_{i+1}^2$ are
independent Chi-square random variables of degree $n-i$. Thus, the
right-hand side of \eqref{eqnbase-times-height1} is a sum of
independent random variables. Notice that $\Delta^2_{i+1}$ has mean
$n-i$ and variance $O(n-i)$ and is very strongly concentrated. Thus,
with high probability $\log\Delta_{i+1}^2$ is roughly $\log((n-i) +
O(\sqrt{n-i}) )$ and so it is easy to show that $\log\Delta_{i+1}^2$
has mean close to $\log(n-i)$ and variance $O( \frac{1}{n-i})$. So
the variance of $\sum_{i=0}^{n-1} \log\Delta_{i+1}^2$ is $O(\log
n)$. To get the precise value $\sqrt{2 \log n}$, one needs to carry
out some careful (but rather routine) calculation, which we leave as an
exercise.

The reason for which we think that the rate $\log^{-1/3 +o(1)} n$
might be near optimal is that
(as the reader will see though the proofs)
$2 \log n$ is only an asymptotic value of the variance of $\log|\det
A_n|$. This approximation has an error term of order at least $\Omega
(1)$ and since $\sqrt{2 \log n +\Omega(1)}$ $- \sqrt{2 \log n}
=\Omega(\log^{-1/2} n)$, it seems that one cannot have rate of
convergence better than $\log^{-1/2 +o(1)} n $. It is a quite
interesting question whether one can obtain a polynomial rate by
replacing $\log(n-1)!$ and $2 \log n$ by other, relatively simple,
functions of $n$.

Our arguments rely on recent developments in random matrix theory and
look quite different from those in Girko's papers. In particular, we
benefit from
the arguments developed in \cite{TVdet,TVhard,TVlocal}.
We also use Talagrand's famous concentration inequality
frequently to obtain most of the large deviation results needed in this paper.

\begin{notation*}
We say that an event $E$ holds almost surely if $\P(E)$ tends to one
as $n$ tends to infinity.
For an event $A$, we use the subscript $\P_{\Bx}(A)$ to emphasize
that the probability under consideration is taken according to the
random vector $\Bx$.
For $1\le s \le n$, we denote by $\Be_s$ the unit vector $(0,\ldots
,0,1,0,\ldots,0)$, where all but the $s$th component are zero. All
standard asymptotic notation such as $O, \Omega, o,\ldots$ etc. are used
under the assumption that $n \rightarrow\infty$.
\end{notation*}

\section{Our approach and main lemmas}\label{sectionapproach}

We first make two extra assumptions about $A_n$. We assume that the
entries $a_{ij}$ are bounded in absolute value by $\log^{\beta} n$
for some constant $\beta>0$ and $A_n$ has full rank with probability
one. We will prove Theorem \ref{theoremmain} under these two extra
assumptions. In \hyperref[appendixmodel]{Appendix}, we will explain why we can implement these assumptions
without violating the generality of Theorem~\ref{theoremmain}.

\begin{theorem}[(Main theorem with extra assumptions)]\label{theoremmain1}
Assume that all atom variables $a_{ij}$ satisfy condition \textup{C0}
and are bounded in absolute value by $\log^{\beta} n$ for some
constant $\beta$. Assume furthermore that $A_n$ has full rank with
probability one. Then
%
\begin{equation}
\label{lim1} \qquad\sup_{x\in\R} \biggl|\P \biggl(\frac{\log(|\det A_n|)- ({1}/{2})
\log(n-1)!}{\sqrt{({1}/{2}) \log n}}\le x \biggr) -
\Phi(x) \biggr| \le\log^{-1/3 +o(1)} n.
\end{equation}
\end{theorem}

In the first, and main, step of the proof, we prove the claim of
Theorem \ref{theoremmain1}
but with the last $\log^{\alpha} n$ rows being replaced
by Gaussian rows (for some properly chosen constant $\alpha$). We
remark that the replacement trick was also used in \cite{G}, but for
an entirely different reason. Our reason here is that for the last few
rows, Lemma \ref{lemmaTalagrand} is not very effective.

\begin{theorem}\label{theoremmainweak} For any constant $\beta>1$
the following holds for any sufficiently large constant $\alpha>0$.
Let $A_n$ be an $n$ by $n$ matrix whose entries $a_{ij}, 1\le i\le n_0,
1\le j \le n$, are independent real random variables of zero mean, unit
variance and absolute values at most $\log^\beta n$. Assume
furthermore that $A_n$ has full rank with probability one and the
components of the last $\log^\alpha n$ rows of $A$ are independent
standard Gaussian random variables. Then
%
\begin{equation}
\label{lim3} \sup_{x\in\R}\biggl |\P \biggl(\frac{\log(\det A_n^2)- \log
(n-1)!}{\sqrt{2 \log n}}\le x \biggr) -
\Phi(x) \biggr| \le\log^{-1/3 +o(1)}n.
\end{equation}
\end{theorem}

In the second (and simpler) step of the proof, we carry out a
replacement procedure, replacing the
Gaussian rows by the original rows one at a time,j and show that the
replacement does not effect the
central limit theorem. This step is motivated by the Lindeberg replacement
method used in \cite{TVlocal}.

We present the verification of Theorem \ref{theoremmain1} using
Theorem \ref{theoremmainweak} in Section
\ref{sectiondeduction}. In the rest of this section, we focus on the
proof of Theorem \ref{theoremmainweak}.

Notice that in the setting of this theorem, the variables $\Delta_i$
are no longer independent. However, with some work, we can make the RHS
of \eqref{eqnbase-times-height1} into a sum of martingale differences
plus a negligible error, which lays ground for an application of a
central limit theorem of martingales. (In \cite{G}, Girko also used
the CLT for martingales via the base times height formula, but his
analysis looks very different from ours.) We are going to use the
following theorem, due to Machkouri and Ouchti \cite{MO}.

\begin{theorem}[(Central limit theorem for martingales, \cite{MO}, Theorem 1)]\label{theoremmartingale}
There exists an absolute constant $L$ such that the following holds.
Assume that $X_{1},\ldots,X_{m}$ are martingale differences with
respect to the nested $\sigma$-algebras $\CE_0,\CE_{1},\ldots, \CE_{m-1}$. Let $v_m^2:= \sum_{i=0}^{m-1}\E(X_{i+1}^2|\CE_i)$, and
$s_m^2:= \sum_{i=1}^{m}\E(X_{i}^2)$. Assume that $\E(|X_{i+1}^3||\CE_i) \le\gamma_i \E(X_{i+1}^2|\CE_i)$ with probability one for all
$i$, where $(\gamma_i)_1^m$ is a sequence of positive real numbers.
Then we have
\begin{eqnarray*}
&&\sup_{x\in\R} \biggl|\P\biggl(\frac{\sum_{0\le i<m}X_{i+1}}{s_m} <x\biggr) - \Phi(x) \biggr| \\
&&\qquad\le L
\biggl(\frac{\max\{\gamma_0,\ldots,\gamma_{m-1}\}
\log m}{\min\{s_m,2^m\}} + \E^{1/3}\biggl(\biggl|\frac{v_m^2}{s_m^2}-1\biggr|\biggr)
\biggr).
\end{eqnarray*}
\end{theorem}

To make use of this theorem, we need some preparation. Conditioning on
the first $i$ rows
$\Ba_1,\ldots, \Ba_{i}$, we can view $\Delta_{i+1}$ as the distance
from a random vector to $V_i:= \operatorname{ Span }
(\Ba_1, \ldots, \Ba_i)$. Since $A_n$ has full rank with probability
one, $\dim V_i =i$ with probability one for all $i$.
The following is a direct corollary of~\cite{TVlocal}, Lemma~43.

\begin{lemma}\label{lemmaTalagrand}
For any constant $\beta>0$ there
is a constant $C_3 >0$ depending on~$\beta$ such that the following holds.
Assume that $V\subset\R^n$ is a subspace of dimension $\dim(V) \le
n- 4$. Let $\Ba$ be a random vector whose components are independent
variables of zero mean and unit variance
and absolute values at most $\log^{\beta} n$. Denote by $\Delta$ the
distance from $\Ba$ to $V$. Then we have
\[
\E\bigl(\Delta^2\bigr)=n-\dim(V) = n-i
\]
and for any $t>0$
\[
\P\bigl(\bigl|\Delta-\sqrt{n-\dim(V)}\bigr|\ge t\bigr) =O \biggl(\exp\biggl(-
\frac{t^2}{\log^{C_3} n}\biggr) \biggr).
\]
\end{lemma}

Set
\[
n_0 := n -\log^{\alpha} n,
\]
where $\alpha$ is a sufficiently large constant (which may depend on
$\beta$). We will use shorthand $k_i$ to denote $n-i$, the
co-dimension of
$V_i$ (and the expected value of~$\Delta_i^2$),
\[
k_i:=n-i.
\]

We next consider each term of the right-hand side of \eqref
{eqnbase-times-height1} where $0\le i<n_0$. Using the Taylor
expansion, we write
\begin{eqnarray*}
\log\frac{\Delta_{i+1}^2}{k_i} &= &\log\biggl(1+\frac{\Delta_{i+1}^2-k_i}{k_i}\biggr)
\\
&= &\frac{\Delta_{i+1}^2-k_i}{k_i}-\frac{1}{2}\biggl(\frac{\Delta_{i+1}^2-k_i}{k_i}
\biggr)^2+R_{i+1}
\\
&:=& X_{i+1}- \frac{X^2_{i+1}}{2}+R_{i+1},
\end{eqnarray*}
where
\[
X_{i+1} := \frac{\Delta_{i+1}^2-k_i}{k_i}\quad \mbox{and }\quad R_{i+1} :=
\log(1+X_{i+1}) - \biggl(X_{i+1} -\frac{X_{i+1}^2}{2} \biggr).
\]

By applying Lemma \ref{lemmaTalagrand} with $t=k_i^{1/8}\ge\log^{\alpha/8} n$
and by choosing $\alpha$ sufficiently large, we have
with probability at least $1-O(\exp(-\log^2 n))$ [the probability
here is with respect to the random $(i+1)$th row, fixing the first $i$
rows arbitrarily]
%
\begin{equation}
\label{boundonX} |X_{i+1}|=O\bigl(k_i^{-3/8}
\bigr)=O\bigl((n-i)^{-3/8}\bigr) =o(1).
\end{equation}

Thus, with probability at least $1-O(\exp(-\log^2 n))$
\[
|R_{i+1}|=O\bigl(|X_{i+1}|^3\bigr)=O
\bigl((n-i)^{-9/8}\bigr).
\]

Hence, by a uniform bound, the following holds with probability at
least $1-n\cdot O(\exp(-\log^2n))=1-O(\exp(-\log^2n/2))$:
\[
\sum_{i<n_0}R_{i+1}=O\biggl(\sum
_{i< n_0}(n-i)^{-9/8}\biggr)=o\bigl(\log^{-2} n
\bigr),
\]
again by having
$\alpha$ sufficiently large.

We conclude the following:

\begin{lemma} \label{R} With probability at least $1-O(\exp(-\log^2n/2))$
\[
\frac{\sum_{i< n_0} R_{i+1}}{\sqrt{2\log n} }= o\biggl(\frac{\log^{-2}
n}{\sqrt{2\log n}}\biggr).
\]
\end{lemma}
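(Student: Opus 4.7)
The plan is to prove Lemma \ref{R} as a direct packaging of the estimates already derived in the paragraphs preceding its statement. The argument proceeds in three short steps: a conditional tail bound for $|X_{i+1}|$, a Taylor-type bound for $|R_{i+1}|$, and a union bound combined with summation over $i < n_0$.

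First I would fix $i < n_0$, condition on the first $i$ rows of $A_n$, and apply Lemma \ref{lemma:Talagrand} with $t$ of order $\log^{(C_3+2)/2} n$ to the subspace $V_i$, whose codimension is $k_i = n-i \ge \log^\alpha n$. This yields $|\Delta_{i+1} - \sqrt{k_i}| \le \log^{(C_3+2)/2} n$ with conditional probability at least $1 - O(\exp(-\log^2 n))$, and squaring gives $|X_{i+1}| = |\Delta_{i+1}^2 - k_i|/k_i = O(k_i^{-1/2} \log^{(C_3+2)/2} n)$. Choosing $\alpha$ large enough that $\log^{(C_3+2)/2} n \le k_i^{1/8}$ turns this into $|X_{i+1}| = O(k_i^{-3/8}) = o(1)$, matching \eqref{boundonX}.

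Second, on the event $|X_{i+1}| \le 1/2$, the Taylor identity $\log(1+x) - (x - x^2/2) = O(|x|^3)$ yields $|R_{i+1}| = O(|X_{i+1}|^3) = O(k_i^{-9/8})$. Third, a union bound over the at most $n$ indices $i$ costs a factor of $n$ in the failure probability, which is absorbed as $n \cdot O(\exp(-\log^2 n)) = O(\exp(-\log^2 n / 2))$ for large $n$. On the resulting good event,
$$\Big|\sum_{i < n_0} R_{i+1}\Big| = O\Big(\sum_{i < n_0} k_i^{-9/8}\Big) = O\Big(\sum_{k > \log^\alpha n} k^{-9/8}\Big) = O(\log^{-\alpha/8} n),$$
and taking $\alpha > 16$ makes the right-hand side $o(\log^{-2} n)$. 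Dividing by $\sqrt{2\log n}$ gives the lemma.

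There is no real obstacle here; every ingredient has been prepared earlier. The only detail to watch is aligning the polylogarithmic factor in the Talagrand tail bound with the power $k_i^{-3/8}$ required for the cubic estimate on $R_{i+1}$, which is why $\alpha$ must be taken larger than a constant multiple of $C_3$ (and, separately, larger than $16$ to make the final sum negligible).
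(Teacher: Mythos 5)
Your proposal is correct and follows essentially the same route as the paper: Talagrand's concentration (Lemma \ref{lemma:Talagrand}) to get $|X_{i+1}|=O(k_i^{-3/8})$ conditionally with failure probability $O(\exp(-\log^2 n))$, the cubic Taylor remainder bound for $R_{i+1}$, and then a combination over $i$ (you use a union bound where the paper multiplies conditional probabilities, which is immaterial) together with $\sum_{k>\log^{\alpha}n}k^{-9/8}=O(\log^{-\alpha/8}n)$ and a sufficiently large choice of $\alpha$. Your version is in fact slightly more explicit than the paper's about how $t$ and $\alpha$ must be chosen relative to $C_3$.
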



We will need three other lemmas.

\begin{lemma}[(Main contribution)] \label{lemmamain1}
\[
\sup_{x\in\R}\biggl |\P \biggl(\frac{\sum_{i< n_0} X_{i+1}}{\sqrt {2\log n} }\le x \biggr) -\Phi(x)\biggr | \le
\log^{-1/3+o(1)} n.
\]
\end{lemma}

\begin{lemma}[(Quadratic terms)] \label{lemmamain2}
\[
\P \biggl( \biggl| \frac{ -\sum_{i< n_0} X_{i+1}^2/2 + \log n}{ \sqrt {2\log n}}\biggr | \ge\log^{-1/3+o(1)} n \biggr) \le
\log^{-1/3+o(1)}n .
\]
\end{lemma}

\begin{lemma}[(Last few rows)] \label{lemmamain3}
For any constant $0 < c< 1/100$
\[
\P \biggl( \biggl| \frac{\sum_{n_0\le i} \log({\Delta_{i+1}^2}/{(n-i)})} {\sqrt{2\log n} }\biggr | \ge\log^{-1/2 +c} n \biggr)=o \bigl(\exp
\bigl(- \log^{c/2} n\bigr)\bigr).
\]
\end{lemma}


Theorem \ref{theoremmainweak} follows from the above four lemmas and
the following trivial fact
(used repeatedly and with proper scaling):
\[
\P(A+B \le\sigma x) \le\P\bigl(A \le\sigma(x-\eps)\bigr) + \P(B \le\sigma\eps).
\]

The reader is invited to fill in the simple details using the following
observation:
\begin{eqnarray*}
&&\log\bigl(\det A_n^2\bigr) - \log(n-1)!\\
 && \qquad=\sum
_{i=0}^{n-1} \log\Delta_{i+1}^2 -
\log(n-1)!
\\
&&\qquad= \sum_{i=0}^{n-1}\log\frac{\Delta_{i+1}^2}{k_i} +
\log n! -\log (n-1)!
\\
&&\qquad= \sum_{i<n_0}\biggl(X_{i+1}-
\frac{X_{i+1}^2}{2}+R_{i+1}\biggr) + \sum_{n_0 \le
i}
\log\frac{\Delta_{i+1}^2}{k_i} + \log n
\\
&&\qquad= \sum_{i<n_0} X_{i+1} -\biggl(\sum
_{i<n_0} \frac{X_{i+1}^2}{2}-\log n\biggr) + \sum
_{i<n_0} R_{i+1} + \sum_{n_0 \le i}
\log\frac{\Delta_{i+1}^2}{k_i}.
\end{eqnarray*}

We will prove Lemma \ref{lemmamain1} using Theorem \ref
{theoremmartingale}. Lemma \ref{lemmamain2} will be verified by the
moment method and Lemma \ref{lemmamain3} by
elementary properties of Chi-square variables. The key to the proof of
Lemmas \ref{lemmamain1} and
\ref{lemmamain2} is an estimate on the entries of the projection
matrix onto
the space $V_i^{\bot}$, presented in Section \ref{sectionstep2}.

\section{\texorpdfstring{Proof of Lemmas \protect\ref{lemmamain1} and~\protect\ref{lemmamain2}: Opening}
{Proof of Lemmas 2.6 and 2.7: Opening}}\label{sectionstep1}

We recall from the previous section that $X_{i+1} = \frac{\Delta_{i+1}^2-k_i}{k_i}$. Denote by $P_i=(p_{st}(i))_{s,t}$ the projection
matrix onto
the orthogonal complement $V_i^\bot$. A standard fact in linear algebra
is
%
\begin{equation}
\label{eqnP} \tr(P_i)= \sum_{s}p_{ss}(i)=k_i\quad
\mbox{and}\quad\sum_{s,t}p_{st}(i)^2
= \sum_s p_{ss}(i) = k_i.
\end{equation}

We now express $X_{i+1}$ using $P_i$,
%
\begin{equation}
\label{eqnformulaX} \qquad X_{i+1}=\frac{\|P_i \Ba_{i+1}\|^2-k_i}{k_i}= \frac{\sum_{s,t}p_{st}(i)a_sa_t-k_i}{k_i} :=
\sum_{s,t}q_{st}(i)a_sa_t-1,
\end{equation}
where $a_1=a_{i+1,1},\ldots,a_n=a_{i+1,n}$ are the coordinates of the
vector $\Ba_{i+1}$ and
\[
q_{st}(i):=\frac{p_{st}(i)}{k_i}.
\]
By \eqref{eqnP} we have $\sum_{s} q_{ss}(i) =1$ and $\sum_{s,t}
q_{st}(i)^2 = \frac{1}{k_i}. $

Because $\E a_s=0$ and $\E a_s^2=1$, and the $a_s$ are mutually
independent, we can show by using a routine calculation that [see
\eqref{eqnkey} from Section \ref{sectionmain2}]
%
\begin{equation}
\label{eqnvariance} \E\bigl(X^2_{i+1}|\CE_i
\bigr) =\frac{2}{k_i}-\sum_s
{q_{ss}(i)}^2\bigl(3-\E{a_s^4}
\bigr),
\end{equation}
where $\CE_i$ is the $\sigma$-algebra generated by the first $i$ rows
of $A_n$.

Define
\[
Y_{i+1} := -\frac{X_{i+1}^2} {2} +\frac{1}{k_i} -\frac{1}{2}
\sum_s {q_{ss}(i)}^2\bigl(3-
\E{a_s^4}\bigr)
\]
and
\[
Z_{i+1} := \frac{1}{2} \sum_s
{q_{ss}(i)}^2\bigl(3-\E{a_s^4}
\bigr).
\]
The reason we split $-\frac{X_{i+1}^2} {2}+\frac{1}{k_i}$ into the
sum of $Y_{i+1}$ and $Z_{i+1}$ is that $\E(Y_{i+1}|\CE_i)=0$ and its
variance can be easily computed.

\begin{lemma}\label{lemmamain22}
\[
\P \biggl(\biggl|\frac{\sum_{i< n_0} Y_{i+1}}{\sqrt{2\log n}}\biggr| \ge\log^{-1/3+o(1)} n \biggr) \le
\log^{-1/3+o(1)} n.
\]
\end{lemma}
To complete the proof of Lemma \ref{lemmamain2} from Lemma \ref
{lemmamain22}, it suffices to show that the sum of the $Z_i$ is negligible,
%
\begin{equation}
\label{eqnZ} \P \biggl(\frac{\sum_{i< n_0} Z_{i+1}}{\sqrt{2\log n}}=\Omega\biggl(\frac
{\log\log n}{\sqrt{2\log n}}\biggr)
\biggr)=O\bigl(n^{-100}\bigr).
\end{equation}
Our main technical tool will be the following lemma.

\begin{lemma}\label{lemmaerrorterm} With probability $1-O(n^{-100})$
we have
\[
\sum_{i<n_0}\sum_s
{q_{ss}(i)}^2= O(\log\log n).
\]
\end{lemma}

Noticing that $\E a_s^4$ is uniformly bounded (by condition \textup
{C0}), it follows that with probability $1- O(n^{-100})$,
\[
\sum_{i<n_0}\sum_s
q_{ss}(i)^2\bigl|3- \E a_s^4\bigr| =O(\log
\log n),
\]
proving \eqref{eqnZ}.

\section{\texorpdfstring{Proof of Lemmas \protect\ref{lemmamain1} and~\protect\ref{lemmamain2}: Mid game}
{Proof of Lemmas 2.6 and 2.7: Mid game}}\label{sectionstep2}

The key idea for proving Lemma \ref{lemmaerrorterm} is to establish a
good upper bound for $|q_{ss}(i)|$.
For this, we need some new tools. Our main ingredient is the following
delocalization result, which is a variant of
a result from \cite{TVhard} (see also \cite{E} and \cite{TVsurvey}
for recent surveys), asserting that with high probability all unit
vectors in the orthogonal complement of a random subspace with high
dimension have small infinity norm.

\begin{lemma}\label{lemmainfinitynorm} For any constant $\beta>0$
the following holds for all sufficiently large constant $\alpha>0$.
Assume that the components of $\Ba_1,\ldots, \Ba_{n_1}$, where
$n_1:=n-n\log^{-4\alpha}n$, are independent random variables of mean
zero, variance one and bounded in absolute value by $\log^{\beta} n$.
Then with probability $1-O(n^{-100})$, the following holds for all unit
vectors $\Bv$ of the space $V_{n_1}^\bot$:
\[
\|\Bv\|_\infty= O\bigl(\log^{-2\alpha}n\bigr).
\]
\end{lemma}

\begin{pf*}{Proof of Lemma \ref{lemmaerrorterm} assuming Lemma \ref
{lemmainfinitynorm}} Write
\begin{eqnarray*}
S&=&\sum_{i\le n_1} \sum_s
{q_{ss}(i)}^2 + \sum_{n_1 < i < n_0}
\sum_s {q_{ss}(i)}^2
\\
&:=&S_1+S_2.
\end{eqnarray*}
Note that as $q_{st}(i)=p_{st}(i)/k_i$,
\[
\sum_s q_{ss}(i)^2 \le\sum
_{s,t} q_{st}(i)^2 =\sum
_{s,t} \frac
{p_{st}(i)^2}{k_i^2} = \frac{1}{k_i}=
\frac{1}{(n-i)}.
\]
Hence,
\[
S_1\le\sum_{i\le n_1} \sum
_s {q_{ss}(i)}^2 \le \sum
_{i\le n_1} \frac{1}{(n-i)} =O(\log\log n).
\]
To bound $S_2$, note that
\[
p_{ss}(i)=\Be_s^T P_i
\Be_s = \|P_i\Be_s\|^2 = \bigr|
\langle\Be_s,\Bv \rangle\bigr|^2
\]
for some unit vector $\Bv\in V_i^\bot$.

Thus, if $i> n_1$, then $V_i^\bot\subset V_{n_1}^\bot$ and, hence, by
Lemma \ref{lemmainfinitynorm}
%
\begin{equation}
\label{eqndiagonal} p_{ss}(i) \le\|\Bv\|^2_\infty=
O\bigl(\log^{-4\alpha}n\bigr).
\end{equation}

It follows that
\begin{eqnarray*}
S_2&\le &\sum_{n_1 < i < n_0} \max_sp_{ss}(i)
\sum_s \frac{p_{ss}(i)}{(n-i)^2}
\\
&=& O\bigl(\log^{-4\alpha}n\bigr)\sum_{n_1 \le i < n_0}
\frac{1}{(n-i)}=O \bigl(\log^{-4\alpha+1 }n\bigr),
\end{eqnarray*}
completing the proof of Lemma \ref{lemmaerrorterm}.\vadjust{\goodbreak}
\end{pf*}

We now focus on the infinity norm of $\Bv$ and follow an argument from
\cite{TVhard}.

\begin{pf*}{Proof of Lemma \ref{lemmainfinitynorm}}
By the union bound, it suffices to show that $|v_1|=O(\log^{-2\alpha
}n)$ with probability at least $1-O(n^{-101})$, where $v_1$ is the
first coordinate of $\Bv$.

Let $B$ be the matrix formed by the first $n_1$ rows $\Ba_1,\ldots,
\Ba_{n_1}$ of $A$. Assume that $\Bv\in V_{n_1}^\bot$ is a unit
vector, then
\[
B\Bv=0.
\]

Let $\Bw$ be the first column of $B$, and $B'$ be the matrix obtained
by deleting $\Bw$ from $B$. Clearly,
%
\begin{equation}
\label{eqnlemmas1} v_1\Bw=-B'\Bv',
\end{equation}
where $\Bv'$ is the vector obtained from $\Bv$ by deleting $v_1$.

We next invoke the following result, which is a variant of \cite{TVhard}, Lemma
4.1.
This lemma was proved using a method of Guionet and Zeitouni \cite
{GZ}, based on Talagrand's inequality.

\begin{lemma}[(Concentration of singular values)]\label{lemmaMPlaw} For
any constant $\beta>0$ the following holds for all sufficiently large
constant $\alpha>0$.
Let $A_n$ be a random matrix of size $n$ by $n$, where the entries
$a_{ij}$ are independent random variables of mean zero, variance one
and bounded in absolute value by $\log^{\beta} n$. Then for any
$n/\log^\alpha n\le k\le n/2$, there exist $2k$ singular values of
$A_n$ in the interval $[0,ck/\sqrt{n}]$, for some absolute constant
$c$, with probability at least $1-O(n^{-101})$.
\end{lemma}

We can prove Lemma \ref{lemmaMPlaw} by following the arguments in
\cite{TVhard}, Lemma 4.1, almost word by word.

By the interlacing law and Lemma \ref{lemmaMPlaw}, we conclude that
$B'$ has $n-n_1$ singular values in the interval $[0,c(n-n_1)/\sqrt {n}]$ with probability $1-O(n^{-101})$.

Let $H$ be the space spanned by the left singular vectors of these
singular values, and let $\pi$ be the orthogonal projection onto $H$.
By definition, the spectral norm of $\pi B'$ is bounded,
\[
\bigl\|\pi B'\bigr\| \le c(n-n_1)/\sqrt{n}.
\]
Thus, \eqref{eqnlemmas1} implies that
%
\begin{equation}
\label{eqnlemmas2} |v_1| \|\pi\Bw\| \le c(n-n_1)/
\sqrt{n},
\end{equation}
here we used the fact that $\Bw$ is independent from $B'$, and thus
from $\pi$.

On the other hand, since the dimension of $H$ is $n-n_1$, Lemma \ref
{lemmaTalagrand} implies that $\|\pi\Bw\| \ge\sqrt{n-n_1}/2$ with
probability $1-4\exp(-(n-n_1)/16)=1-O(n^{-\omega(1)})$.

It thus follows from \eqref{eqnlemmas2} that
\[
|v_1|=O\bigl(\log^{-2\alpha} n\bigr).
\]
\upqed\end{pf*}

\section{\texorpdfstring{Proof of Lemma \protect\ref{lemmamain1}: End game}
{Proof of Lemma 2.6: End game}}\label
{sectionmain1}



Recall from \eqref{boundonX} that conditioned on any first $i$ rows,
$|X_i|=O(k_i^{-3/8})$ with probability $1-O(\exp(-\log^2n/2))$. So,
by paying an extra term of $O(\exp(-\log^2n/2))$ in probability, it
suffices to justify Lemma \ref{lemmamain1} for the sequence
$X_i':=X_i \cdot\BI_{|X_i|=O(k_i^{-3/8})}$.

On the other hand, the sequence $X_{i+1}'$ is not a martingale
difference sequence, so we slightly modify $X_{i+1}'$ to
$X_{i+1}'':=X_{i+1}' -\E(X_{i+1}'|\CE_i)$ and prove the claim for the
sequence $X_{i+1}'$, here we recall that $\CE_i$ is the $\sigma
$-algebra generated by the first $i$ rows of $A_n$. In order to show
that this modification has no effect whatsoever, we first demonstrate
that $\E(X_{i+1}'|\CE_i)$ is extremely small.

Recall from \eqref{eqnformulaX} that $X_{i+1}= \sum_{s,t}q_{st}(i)a_sa_t-1$. By the Cauchy--Schwarz inequality
and the assumption that $a_s$ are bounded in absolute value by $\log^{O(1)} n$, we have with probability one
%
\begin{eqnarray}
\label{boundX} |X_{i+1}|^2 &\le&\biggl(1+\sum
_{s,t}q_{st}(i)^2\biggr) \biggl(1+\sum
_{s,t}a_s^2a_t^2
\biggr) =(1+1/k_i) \biggl(1+\sum_{s,t}a_s^2a_t^2
\biggr)
\nonumber
\\[-8pt]
\\[-8pt]
\nonumber
&\le&2 \biggl(1+\sum_{s,t}a_s^2a_t^2
\biggr)\le n^2 \log^{O(1)}n.
\end{eqnarray}

Thus, with probability one
%
\begin{equation}
\label{eqnexpectation} \qquad\bigl|\E\bigl(X_{i+1}'|
\CE_i\bigr)\bigr| = \bigl|\E\bigl(X_{i+1}'|
\CE_i\bigr) - \E(X_{i+1}|\CE_i)\bigr| \le\exp
\bigl(-\bigl(\tfrac{1}{2}-o(1)\bigr)\log^2 n\bigr).
\end{equation}

To justify Lemma \ref{lemmamain1} for the sequence $X_{i+1}''$, we
apply Theorem \ref{theoremmartingale}.

The key point here is that thanks to the indicator function in the
definition of
$X_{i+1}'$ and the fact that the difference between $X_{i+1}^{\prime\prime}$ and
$X_{i+1}'$ is negligible,
$X_{i+1}''$ is bounded by $O(k_i^{-3/8})$ with probability one,
so the conditions $\E(|X_{i+1}''|^3|\CE_i)\le\gamma_i \E
({X_{i+1}''}^2|\CE_i)$ in Theorem
\ref{theoremmartingale} are satisfied with
\[
\gamma_i=O\bigl(k_i^{-3/8}\bigr)= O\bigl(
\log^{-3\alpha/8}n \bigr).
\]

We need to estimate $s_{n_0},v_{n_0}$ with respect to the sequence
$X_{i+1}''$. However, thanks to
the observations above, $X_{i+1}$ and $X_{i+1}^{\prime\prime}$ are very close,
and so it suffices to compute these values with respect to the sequence
$X_{i+1}$.

Recall from \eqref{eqnvariance} that
\[
\E\bigl(X^2_{i+1}|\CE_i\bigr) =
\frac{2}{k_i}-\sum_s {q_{ss}(i)}^2
\bigl(3-\E{a_s^4}\bigr).
\]

Also, recall from Section \ref{sectionstep2} that with probability
$1-O(n^{-100})$,
\[
\sum_{i < n_0} \sum_s
q_{ss}(i)^2 \bigl(3 - \E a_s^4
\bigr) =O(\log\log n).
\]

This bound, together with \eqref{eqnvariance} and \eqref{boundX},
imply that with probability one
%
\[
\E\biggl(\sum_{i < n_0} X_{i+1}^2|
\CE_i \biggr) = \sum_{i < n_0}
\frac{2} {
k_i} +O(\log\log n) =2\log n +O(\log\log n),
\]
which in turn implies that $v_{n_0}^2 = 2 \log n +O(\log\log n)$ with
probability $1 - O(n^{-100})$.

Using \eqref{boundX} again, because $n^{-100} n^2 \log^{O(1)} n
=o(1)$, we deduce that
%
\begin{equation}
\label{boundsm} s_{n_0}^2 = 2\log n + O(\log\log n).
\end{equation}

With another application of \eqref{boundX}, we obtain
\[
\E\biggl|\frac{v_{n_0}^2}{s_{n_0}^2}-1\biggr| \le O\biggl(\frac{\log\log n}{\log n} \biggr) +
n^{-100} n^2 \log^{O(1)} n .
\]

It follows that
\[
\E^{1/3} \biggl|\frac{v_{n_0}^2}{s_{n_0}^2}-1\biggr| \le\log^{-1/3+o(1)}n.
\]

By the conclusion of Theorem \ref{theoremmartingale} and setting
$\alpha$ sufficiently large, we
conclude
\begin{eqnarray*}
&&\sup_{x\in\R}\biggl |\P\biggl(\frac{\sum_{i<n_0}X_{i+1}''}{s_{n_0}} <x\biggr) - \Phi(x) \biggr| \\
&&\qquad\le L
\biggl(\frac{\log^{-3\alpha/8}n \times\log
n_0}{s_{n_0}} + \E^{1/3}\biggl(\biggl|\frac{v_{n_0}^2}{s_{n_0}^2}-1\biggr|\biggr)
\biggr)
\\
&&\qquad\le \log^{-1/3+o(1)} n,
\end{eqnarray*}
completing the proof of Lemma \ref{lemmamain1}.


\section{\texorpdfstring{Proof of Lemma \protect\ref{lemmamain2}: End game}
{Proof of Lemma 2.7: End game}}\label
{sectionmain2}

Our goal is to justify Lemma \ref{lemmamain22}, which together with
\eqref{eqnZ} verify
Lemma \ref{lemmamain2}.

We will show that the variance $\Var(\sum_{i<n_0}Y_{i+1})$ is small
and then use Chebyshev's inequality.
The proof is based on a series of routine, but somewhat tedious
calculations. We first show that the expectations of the $Y_{i+1}$'s
are zero, and so are the covariances $\E(Y_{i+1} Y_{j+1})$ by
an elementary manipulation. The variances $\Var(Y_{i+1})$ will be
bounded from above by the Cauchy--Schwarz inequality.

We start with the formula $X_{i+1}^2=(\sum_{s,t}q_{st}(i)a_sa_t)^2 -2
\sum_{s,t}q_{st}(i)a_sa_t +1$. Observe that
\begin{eqnarray*}
\biggl(\sum_{s,t}q_{st}(i)a_sa_t
\biggr)^2&=&\biggl(\sum_{s}q_{ss}(i)a_{s}^2
+ \sum_{s\neq t}q_{st}(i)a_sa_t
\biggr)^2
\\
&=&\biggl(\sum_{s}q_{ss}(i){a_s}^2
\biggr)^2+\biggl(\sum_{s\neq t}q_{st}(i)a_sa_t
\biggr)^2\\
&&{}+ 2 \biggl(\sum_{s}q_{ss}(i)a_{s}^2
\biggr) \biggl(\sum_{s\neq t}q_{st}(i)a_sa_t
\biggr).
\end{eqnarray*}

Expanding each term, using the fact that $\sum_{s}q_{ss}(i)=1$ and
$\sum_{s,t}q_{st}(i)^2 = \frac{1}{k_i}$, we have
\begin{eqnarray*}
&&\biggl(\sum_{s}q_{ss}(i){a_s}^2
\biggr)^2\\
&&\qquad=\biggl(\sum_{s}q_{ss}(i)
\biggr)^2-\sum_sq_{ss}(i)^2
\bigl(1-a_s^4\bigr)+2\sum_{s\neq
t}q_{ss}(i)q_{tt}(i)
\bigl(a_s^2a_t^2-1\bigr)
\\
&&\qquad =1-\sum_sq_{ss}(i)^2
\bigl(1-a_s^4\bigr)+2\sum_{s\neq
t}q_{ss}(i)q_{tt}(i)
\bigl(a_s^2a_t^2-1\bigr)
\end{eqnarray*}
and
\begin{eqnarray*}
\biggl(\sum_{s\neq t}q_{st}(i)a_sa_t
\biggr)^2 &=&2\sum_{s\neq t}q_{st}(i)^2
+ 2\sum_{s\neq t}q_{st}(i)^2
\bigl(a_s^2a_t^2-1\bigr)\\
&&{}+ 2\mathop{\sum_{s_1\neq t_1,s_2\neq t_2 }}_{\{s_1,t_1\}\neq\{s_2,t_2\}} q_{s_1t_1}(i)q_{s_2t_2}(i)a_{s_1}a_{t_1}a_{s_2}a_{t_2}
\\
&=&\frac{2}{k_i} - 2\sum_{s}
q_{ss}(i)^2 + 2\sum_{s\neq t}q_{st}(i)^2
\bigl(a_s^2a_t^2-1\bigr) \\
&&{}+ 2
\mathop{\sum_{s_1\neq t_1,s_2\neq t_2 }}_{\{s_1,t_1\}\neq\{s_2,t_2\}} q_{s_1t_1}(i)q_{s_2t_2}(i)a_{s_1}a_{t_1}a_{s_2}a_{t_2}
)
\end{eqnarray*}
as well as
\begin{eqnarray*}
&&2 \biggl(\sum_{s}q_{ss}(i)a_{s}^2
\biggr) \biggl(\sum_{s\neq t}q_{st}(i)a_sa_t
\biggr)\\
&&\qquad= 2\biggl(\sum_{s}q_{ss}(i)
\bigl(a_{s}^2-1\bigr)\biggr) \biggl(\sum
_{s\neq t}q_{st}(i)a_sa_t
\biggr)+2 \sum_{s\neq t}q_{st}(i)a_sa_t
.
\end{eqnarray*}

It follows that
%
\begin{eqnarray}
\label{eqnkey} -2Y_{i+1}&=& X_{i+1}^2 -
\frac{2}{k_i} + \sum_s {q_{ss}(i)}^2
\bigl(3-\E {a_s^4}\bigr)\nonumber\\
&=&\biggl(\sum
_{s,t} q_{st}(i)a_sa_t-1
\biggr)^2 - \frac{2}{k_i} + \sum_s
{q_{ss}(i)}^2\bigl(3-\E{a_s^4}
\bigr)
\nonumber\\
&=&\biggl(\sum_{s,t}q_{st}(i)a_sa_t
\biggr)^2-1-2\sum_{s}q_{ss}(i)
\bigl(a_s^2-1\bigr)-2\sum_{s\neq t}q_{st}(i)a_sa_t
- \frac{2}{k_i}\nonumber\\
&&{} + \sum_s
{q_{ss}(i)}^2\bigl(3-\E {a_s^4}
\bigr)
\nonumber
\\[-8pt]
\\[-8pt]
\nonumber
&=&-2\sum_{s}q_{ss}(i)
\bigl(a_s^2-1\bigr) + \sum_s
{q_{ss}(i)}^2\bigl(a_s^4-\E
a_s^4\bigr)\\
&&{}+2\sum_{s\neq t}q_{ss}(i)q_{tt}(i)
\bigl(a_s^2a_t^2-1\bigr)+ 2\sum
_{s\neq
t}q_{st}(i)^2
\bigl(a_s^2a_t^2-1\bigr)
\nonumber
\\
&&{}+ 2\mathop{\sum_{s_1\neq t_1,s_2\neq t_2 }}_{\{
s_1,t_1\}\neq\{s_2,t_2\}} q_{s_1t_1}(i)q_{s_2t_2}(i)a_{s_1}a_{t_1}a_{s_2}a_{t_2}
\nonumber\\
&&{}+ 2 \biggl(\sum_{s}q_{ss}(i)
\bigl(a_{s}^2-1\bigr)\biggr) \biggl(\sum
_{s\neq t}q_{st}(i)a_sa_t
\biggr).\nonumber
\end{eqnarray}

As $\E a_s=0, \E a_s^2=1$, and the $a_s$'s are mutually independent
with each other and with every row of index at most $i$ [and in
particular with $q_{st}(i)$'s], every term in the last formula is zero,
and so we infer that $\E(Y_{i+1})=0$ and $\E(Y_{i+1}|\CE_i)=0$,
confirming \eqref{eqnvariance}. With the same reasoning, we
can also infer that the covariance $\E(Y_{i+1}Y_{j+1})=0$ for all $j<i$.

It is thus enough to work with the diagonal terms $\Var(Y_{i+1})$. We
have
\begin{eqnarray*}
\Var(Y_{i+1})&=&\E \biggl[-\sum_{s}q_{ss}(i)
\bigl(a_s^2-1\bigr) +\frac{1}{2} \sum
_s {q_{ss}(i)}^2\bigl(a_s^4-
\E a_s^4\bigr)
\\
&&\hspace*{14pt}{}+\sum_{s\neq t}q_{ss}(i)q_{tt}(i)
\bigl(a_s^2a_t^2-1\bigr)+ \sum
_{s\neq
t}q_{st}(i)^2
\bigl(a_s^2a_t^2-1\bigr)
\\
&&\hspace*{14pt}{}+ \mathop{\sum_{ s_1\neq t_1,s_2\neq t_2}}_{\{
s_1,t_1\}\neq\{s_2,t_2\}} q_{s_1t_1}(i)q_{s_2t_2}(i)a_{s_1}a_{t_1}a_{s_2}a_{t_2}\\
&&\hspace*{70pt}{}+
\biggl(\sum_{s}q_{ss}(i)
\bigl(a_{s}^2-1\bigr)\biggr) \biggl(\sum
_{s\neq t}q_{st}(i)a_sa_t
\biggr) \biggr]^2.
\end{eqnarray*}

After a series of cancellations, and because of condition \textup{C0},
we have
\begin{eqnarray*}
\Var(Y_{i+1}) &\le& O\biggl (\E \biggl[\sum_s
q_{ss}(i)^2+ \sum_s
q_{ss}(i)^4 +\sum_{s \neq t_1, s\neq t_2}q_{ss}(i)^2
q_{t_1t_1}(i)q_{t_2t_2}(i)
\\
&&\hspace*{29pt}{}+\sum_{s\neq t_1, s\neq t_2} q_{st_1}(i)^2q_{st_2}(i)^2\\
&&\hspace*{29pt}{}+
\sum_{s_1
\neq t_1,s_2 \neq
t_2}\bigl|q_{s_1t_1}(i)q_{s_1t_2}(i)q_{s_2t_1}(i)q_{s_2t_2}(i)\bigr|\\
&&\hspace*{29pt}{}+
\sum_{s,t}q_{ss}(i)q_{tt}(i)q_{st}(i)^2
\\
&&\hspace*{29pt}{}+\sum_sq_{ss}(i)^3+ \sum
_{s,t}q_{ss}(i)^2
q_{tt}(i)+ \sum_{s,t}q_{ss}(i)
q_{st}(i)^2\\
&&\hspace*{29pt}{}+\sum_{s,t}\bigl|q_{ss}(i)q_{tt}(i)q_{st}(i)\bigr|
\\
&&\hspace*{29pt}{}+\sum_{s,t}q_{ss}(i)^3q_{tt}(i)
+\sum_{s\neq t} q_{ss}(i)^2
q_{st}(i)^2\\
&&\hspace*{29pt}{}+\sum_{s,t}\bigl|q_{ss}(i)^2q_{tt}(i)q_{st}(i)\bigr|
\\
&&\hspace*{29pt}{}+\sum_{s,t}q_{ss}(i)q_{tt}(i)q_{st}(i)^2+
\sum_{s\neq t}\bigl|q_{ss}(i)^2
q_{tt}(i)q_{st}(i)\bigr|
\\
&&\hspace*{29pt}{}+\sum_{s,t}\bigl|q_{st}(i)^3q_{ss}(i)\bigr|
\\
&&\hspace*{85pt}{}+\sum_{s \neq t_1,s\neq t_2, t_1\neq
t_2}\bigl|q_{ss}(i)q_{st_1}(i)q_{st_2}(i)q_{t_1t_2}(i)\bigr|
\biggr] \biggr),
\end{eqnarray*}
where the first two rows consist of the squares of the terms appearing
in $Y_{i+1}$ (after deleting several sums of zero expected value), and
each of the following rows was obtained by expanding the product of
each term with the rest in the order of their appearance.

Because $\sum_{s,t}q_{st}(i)^2 = \frac{1}{k_i}$, one has $\max_{s,t}|q_{st}(i)|\le\frac{1}{\sqrt{k_i}}$ for all $s,t$. Recall
furthermore that $\sum_{s}q_{ss}(i)=1$ and $0\le q_{ss}(i)$ for all
$s$. We next estimate the terms under consideration one by one as follows.

First, the sums $ \sum_s q_{ss}^3(i), \sum_s q_{ss}(i)^4$, $\sum_{s,t}q_{ss}(i) q_{st}(i)^2, \sum_{s,t} q_{ss}(i)^2 q_{st}(i)^2$,
$\sum_{s,t}q_{ss}(i)q_{tt}(i)q_{st}(i)^2$, and $\sum_{s,t}|q_{st}(i)^3q_{ss}(i)|$ can be bounded by\break  $\max_{s,t}|q_{st}(i)|\sum_{s,t} q_{st}^2(i)$, and so by $k_i^{-3/2}$.

Second, by applying the Cauchy--Schwarz inequality if needed, one can
bound the sums $\sum_{s,t_1,t_2} q_{st_1}(i)^2q_{st_2}(i)^2$, $\sum_{s_1,t_1,s_2,t_2}|q_{s_1t_1}(i)q_{s_1t_2}(i)q_{s_2t_1}(i)q_{s_2t_2}(i)|$,
and $\sum_{s,t_1,t_2}|q_{ss}(i)q_{st_1}(i)q_{st_2}(i)q_{t_1t_2}(i)|$
by $2 (\sum_{s,t}q_{st}^2(i))^2$, and so by $2k_i^{-2}$.

We bound the remaining terms as follows:

\begin{itemize}
\item$\sum_{s,t_1, t_2}q_{ss}(i)^2 q_{t_1t_1}(i)q_{t_2t_2}(i)= (\sum_sq_{ss}(i)^2) (\sum_t q_{tt}(i))^2 = \sum_sq_{ss}(i)^2.$
\item$\sum_{s,t}q_{ss}(i)^2 q_{tt}(i)+ \sum_{s,t}q_{ss}(i)^3q_{tt}(i) \le2 (\sum_sq_{ss}(i)^2)(\sum_{t}q_{tt}(i))= 2 \sum_sq_{ss}(i)^2 .$
\item$\sum_{s,t}|q_{ss}(i)q_{tt}(i)q_{st}(i)|\le\sum_{s,t}q_{ss}(i)(q_{tt}(i)^2 + q_{st}(i)^2)
\le\sum_{t}q_{tt}(i)^2+\break
\max_{s}q_{ss}(i)\sum_{s,t}q_{st}(i)^2 $ $\le \sum_{t}q_{tt}(i)^2 +
k_i^{-3/2}.$
\item$\sum_{s,t}|q_{ss}(i)^2 q_{tt}(i)q_{st}(i)| \le\sup_{s,t}|q_{st}(i)|\sum_{s,t}q_{ss}(i)^2q_{tt}(i) \le\sum_{s}q_{ss}(i)^2/\sqrt{k_i}$.
\end{itemize}

Putting all bounds together, we have
%
\begin{eqnarray}
\label{eqnlast} \Var\biggl(\sum_{i<n_0}Y_{i+1}
\biggr)&=&\sum_{i<n_0}\Var(Y_{i+1})=O \biggl(\E
\biggl(\sum_{i<n_0} \sum_sq_{ss}(i)^2
+ \sum_{i<n_0}k_i^{-3/2}\biggr)
\biggr)
\nonumber
\\[-8pt]
\\[-8pt]
\nonumber
&=&O(\log\log n),
\end{eqnarray}
where we applied Lemma \ref{lemmaerrorterm} in the last estimate.

To complete the proof, we note from the estimate of $s_{n_0}^2$ of
Section \ref{sectionmain1} and from Lemma \ref{lemmaerrorterm} that
$|\sum_{i<n_0}\E Y_{i+1}|=O(\log\log n)$. Thus, by Chebyshev's inequality
\[
\P \biggl(\biggl|\frac{\sum_{i<n_0}Y_{i+1}}{\sqrt{2\log n}}\biggr| \ge\log^{-1/3+o(1)} n \biggr) =
\log^{-1/3+o(1)}n.
\]

\section{\texorpdfstring{Proof of Lemma \protect\ref{lemmamain3}}
{Proof of Lemma 2.8}}\label{sectionmain3}



We recall that, with $i\ge n_0$, $\Delta_{i+1}^2$ is a Chi-square
random variable of degree $n-i$. Let us first consider the lower tail;
it suffices to show
%
\begin{equation}
\label{eqnlargeindex}\quad  \P \biggl(\sum_{n_0\le i}
\frac{\log({\Delta_{i+1}^2}/{(n-i)})}{\sqrt{2\log n}}< - \log^{-1/2 +c} n \biggr) =o\bigl( \exp \bigl(-
\log^{c/2} n\bigr)\bigr)
\end{equation}
for any constant $0<c<1/100$.

By properties of the normal distribution, it is easy to show that
$\Delta_n^2$ and $\Delta_{n-1}^2$ are at least $\exp(-\frac{\sqrt {2}}{4}\log^{c} n)$ with probability $1-\exp(-\Omega(\log^c n ))$,
so we can omit these terms from the sum. It now suffices to show that
%
\begin{eqnarray}
\label{eqnlargeindex2}&& \P \biggl(\sum_{n_0\le i \le n-3}
\frac{\log({\Delta_{i+1}^2}/{(n-i)})}{\sqrt{2\log n}}< - \frac{1}{2} \log^{-1/2 +c} n \biggr)
\nonumber
\\[-8pt]
\\[-8pt]
\nonumber
&&\qquad=o\bigl(
\exp\bigl(-\log^{c/2} n\bigr)\bigr)
\end{eqnarray}
for any small constant $0<c<1/100$.

Flipping the inequality inside the probability (by changing the sign of
the RHS and swapping the denominators and numerators in the logarithms
of the LHS) and using the Laplace transform trick (based on the fact
that the $\Delta^2_i$ are independent), we see that the probability in
question is at most
\[
\frac{ \E\prod_{i=n_0}^{n-3} {(n-i)}/{ \Delta_{i+1}^2 } }{ \exp
( ({1}/{\sqrt{2}}) \log^c n) }= \frac{ \prod_{i=n_0}^{n-3} \E {(n-i)}/{ \Delta_{i+1}^2 } }{ \exp
( ({1}/{\sqrt{2}}) \log^c n )}.
\]

Recall that $\Delta_{i+1}^2$ is a Chi-square random variable with
degree of freedom $n-i$, so
$\E\frac{1}{ \Delta^2_{i+1} } = \frac{1}{ n-i-2}$. Therefore, the
numerator in the previous
formula is $\frac{(n-n_0)(n-n_0-1)}{2} \le\log^{2 \alpha} n $.

Because
\[
\frac{\log^{2 \alpha} n }{ \exp( ({1}/{\sqrt{2}}) \log^c n )} =o\bigl(\exp\bigl(-\log^{c/2} n\bigr)\bigr),
\]
the desired bound follows.

The proof for the upper tail is similar (in fact simpler as we do not
need to treat the first two terms
separately) and we omit the details.

\section{\texorpdfstring{Deduction of Theorem \protect\ref{theoremmain1} from Theorem \protect\ref{theoremmainweak}}
{Deduction of Theorem 2.1 from Theorem 2.2}}\label{sectiondeduction}

Our plan is to replace one by one the last $n-n_0$
Gaussian rows of $A_n$ by vectors of components having zero mean, unit
variance and satisfying condition \textup{C0}. Our key tool here is
the classical Berry--Eseen inequality. In order to apply this lemma, we
will make
a crucial use of Lemma \ref{lemmainfinitynorm}.

\begin{lemma}[(\cite{B}, Berry--Esseen inequality)] \label{lemmaB-E}
Assume that $\Bv=(v_1,\ldots,v_n)$ is a unit vector. Assume that
$b_1,\ldots, b_n$ are independent random variables of mean zero,
variance one and satisfying condition \textup{C0}. Then we have
\[
\sup_x\bigl|\P(v_1b_1+\cdots+v_nb_n
\le x)-\Phi(x)\bigr|\le c \|\Bv\|_\infty,
\]
where $c$ is an absolute constant depending on the parameters appearing
in~\eqref{eqntailbound}.
\end{lemma}

We remark that in the original setting of Berry and Esseen, it suffices
to assume the finite third moment.

In application, $\Bv$ plays the role of the normal vector of the
hyperplane spanned by the remaining $n-1$ rows of $A$, and $\Delta_n=|v_1b_1+\cdots+v_nb_n|$, where $(b_1,\ldots,b_n)=\Bb$ is the vector
to be replaced.

For the deduction, it is enough to show the following.

\begin{lemma}\label{lemmareplacement} Let $A_n$ be a random matrix with
atom variables satisfying condition \textup{C0} and
nonsingular with probability one. Assume furthermore that $A_n$ has at
least one and at most
$\log^{\alpha} n$ Gaussian rows. Let $B_n$ be the random matrix
obtained from $A_n$ by replacing a Gaussian row vector $\Ba$ of $A_n$
by a random vector $\Bb=(b_1,\ldots,b_n)$ whose coordinates are
independent atom variables satisfying condition \textup{C0} such that
the resulting matrix is nonsingular with probability one. Then
\begin{eqnarray}\label{eqnreplacement}
&&\sup_x \biggl|\P_{B_n} \biggl(\frac{\log(\det B_n^2)-\log
(n-1)!}{\sqrt{2\log n}} \le x \biggr) \nonumber\\
&&\hspace*{7pt}\quad {}-
\P_{A_n} \biggl(\frac{\log
(\det{A_n}^2)-\log(n-1)!}{\sqrt{2\log n}}\le x \biggr) \biggr|
\\
&&\qquad \le O\bigl(\log^{-2\alpha}n\bigr).\nonumber
\end{eqnarray}
\end{lemma}

Clearly, Theorem \ref{theoremmain} follows from Theorem \ref
{theoremmainweak} by applying Lemma \ref{lemmareplacement} $\log^{\alpha}n$ times.

\begin{pf*}{Proof of Lemma \ref{lemmareplacement}}
Without loss of generality, we can assume that $B_n$ is obtained from
$A_n$ by replacing the last row $\Ba_n$. As $A_n$ is nonsingular,
$\dim(V_{n-1})=n-1$.

By Lemma \ref{lemmainfinitynorm}, by paying an extra term of
$O(n^{-100})$ in probability (which will be absorbed by the eventual
bound $\log^{-2\alpha}n$), we may also assume that the normal vector
$\Bv$ of $V_{n-1}$ satisfies
\[
\|\Bv\|_\infty=O\bigl(\log^{-2\alpha} n\bigr).
\]

Next, observe that
\[
\frac{\log(\det A^2)-\log(n-1)!}{\sqrt{2\log n}} = \frac{\sum_{i=0}^{n-2} \log(\Delta_{i+1}^2/(n-i)) + \log n}{\sqrt{2\log n}} +
\frac{\log\Delta_n^2}{\sqrt{2\log n}}
\]
and
\[
\frac{\log(\det{B}^2)-\log(n-1)!}{\sqrt{2\log n}} = \frac{\sum_{i=0}^{n-2} \log(\Delta_{i+1}^2/(n-i)) + \log n}{\sqrt{2\log n}} + \frac{\log{\Delta_n'}^2}{\sqrt{2\log n}},
\]
where $\Delta_n$ and $\Delta'_n$ are the distance from $\Ba_n$ and
$\Bb_n$ to $V_{n-1}$, respectively.

By Lemma \ref{lemmaB-E}, it is yielded that
\[
\sup_x\bigl|\P_{\Ba_n}\bigl(\Delta_n^2
\le x\bigr)-\P_{\Bb_{n}}\bigl({\Delta_n'}^2
\le x\bigr)\bigr|\le c\|\Bv\|_\infty= O\bigl(\log^{-2\alpha}n\bigr).
\]

Hence,
\begin{eqnarray*}
&&\sup_x \biggl|\P_{\Ba_n} \biggl(\frac{\log(\det A^2)-\log(n-1)!}{\sqrt {2\log n}} \le x \biggr) \\
&&\hspace*{-4pt}\qquad{}-
\P_{\Bb_n} \biggl(\frac{\log(\det
{B}^2)-\log(n-1)!}{\sqrt{2\log n}} \le x \biggr)\biggr|
\\
&&\hspace*{-4pt}\qquad= O\bigl(\log^{-2\alpha}n\bigr),
\end{eqnarray*}
completing the proof of Lemma \ref{lemmareplacement}.
\end{pf*}

\begin{appendix}
\section*{\texorpdfstring{Appendix: Simplifying the model: Deducing Theorem \lowercase{\protect\ref{theoremmain}} from Theorem \lowercase{\protect\ref{theoremmain1}}}
{Appendix: Simplifying the model: Deducing Theorem 1.1 from Theorem 2.1}}
\label{appendixmodel}

In this section we show that the two extra assumptions that $|a_{ij}|
\le\log^{\beta} n $ and~$A_n$ has full rank with probability one do
not violate the generality of Theorem \ref{theoremmain}.

To start with, we need a very weak lower bound on $|\det A_n|$.

\begin{lemma} \label{lemmalower} There is a constant $C$ such that
\[
\P\bigl(|\det A_n | \le n^{-C n}\bigr)\le n^{-1}.
\]
\end{lemma}

\begin{pf}
It follows from
\cite{TVcir}, Theorem 2.1, that there is a constant $C$ such that $\P(\sigma_n(A_n)\le n^{-C})\le n^{-1}$. Since $|\det A_n|$ is the product of its
singular values, the bound follows.
\end{pf}

\begin{remark}
The above bound is extremely weak. By modifying the proof in~\cite
{TVdet}, one can actually prove the
Tao--Vu lower bound \eqref{TVlow} for random matrices satisfying
\textup{C0}.
Also, sharper bounds on the least singular value are obtained in \cite
{TVsmooth,RV}.
However, for the arguments
in this section, we only need the bound on Lemma~\ref{lemmalower}.
\end{remark}

Let us start with the assumption $|a_{ij}| \le\log^{\beta} n$. We
can achieve this assumption using the standard truncation method (see
\cite{BS}
or \cite{TVlocal}). In what follows, we sketch the idea.

Notice that by condition \textup{C0}, we have, with probability at
least $1 -\exp\times ( -\log^{10} n)$, that all entries of $A_n$ have
absolute value at most $\log^{\beta} n$, for some constant $\beta>0
$ which may depend on the constants in \textup{C0}.

We replace the variable $a_{ij}$ by the variable $a_{ij}' := a_{ij} \I_{|a_{ij}| \le\log^{\beta} n }$, for all
$1 \le i, j \le n$ and let $A_n'$ be the random matrix formed by
$a_{ij}'$. Since with probability
at least $1-\exp(-\log^{10} n)$, $A_n =A_n'$, it is easy to show that if
$A_n'$ satisfies the claim of Theorem \ref{theoremmain}, then so does $A_n$.

While the entries of $A_n'$ are bounded by $\log^{\beta}n$, there is
still one problem we need to address,
namely, that the new variables $a_{ij}'$ do not have mean 0 and
variance one. We can achieve this by a simple normalization trick.
First observe that by property \textup{C0}, taking $\beta$
sufficiently large, it is easy to show that $\mu_{ij} = \E a_{ij}'$
has absolute value at most $n^{-\omega(1)}$ and $| 1- \sigma_{ij}|
\le n^{-\omega(1)}$, where $\sigma_{ij} $ is
the standard deviation of $a'_{ij}$. Now define
\[
a_{ij}^{\prime\prime}:=a_{ij}'-
\mu_{ij}
\]
and
\[
a_{ij}^{\prime\prime\prime} = \frac{a_{ij}^{\prime\prime}} {\sigma_{ij}} .
\]

Note that $a_{ij}^{\prime\prime\prime}$ now does have mean zero and variance one. Let
$A_n^{\prime\prime}$ and $A_n^{\prime\prime\prime}$ be the corresponding matrices of $a_{ij}''$
and $a_{ij}'''$, respectively.

By the Brun--Minkowski inequality we have
\[
\bigl|\det\bigl(A_n'\bigr)\bigr| \le\bigl(\bigl|\det
A_n^{\prime\prime} \bigr|^{1/n} + |\det N_n|^{1/n}
\bigr)^n,
\]
where $N_n$ is the matrix formed by $\mu_{ij}$.

Since $|\mu_{ij}| = n^{-\omega(1)}$, by Hadamard's bound $|\det
N_n|^{1/n} \le n^{-\omega(1)}$. On the other hand, we have by Lemma
\ref{lemmalower} that $\P(|\det A_n^{\prime\prime}|^{1/n} \ge n^{-C})\ge
1-n^{-1}$. It thus follows that
\[
\P\bigl(\bigl|\det A_n'\bigr| \le\bigl(1+o(1)\bigr) \bigl|\det
A_n^{\prime\prime}\bigr|\bigr)\ge1-n^{-1}.
\]
We can prove a matching lower bound by the same argument. From here, we
conclude that if
$|\det A_n^{\prime\prime} |$ satisfies the conclusion of Theorem \ref
{theoremmain}, then so does $|\det A_n'|$.

To pass from $\det(A_n^{\prime\prime})$ to $\det(A_n^{\prime\prime\prime})$, we apply the
Brunn--Minkowski inequality again,
\[
\bigl|\det\bigl(A_n^{\prime\prime\prime}\bigr)\bigr| \le\bigl(\bigl|\det
A_n^{\prime\prime} \bigr|^{1/n} + \bigl|\det N_n'\bigr|^{1/n}
\bigr)^n,
\]
where $N_n'$ is the matrix form by $a_{ij}''(1-\sigma_{ij}^{-1})$.
Noting that $|1- \sigma_{ij}^{-1}| \le n^{-\omega(1)}$ and
$|a_{ij}''|=\log^{O(1)}n$, we infer that $|\det(A_n^{\prime\prime})|$ and
$|\det(A_n^{\prime\prime\prime})|$ are comparable with high probability
\[
\P\bigl(\bigl|\det A_n^{\prime\prime}\bigr| = \bigl(1+o(1)\bigr) \bigl|\det
A_n^{\prime\prime\prime}\bigr|\bigr)\ge1-n^{-1}.
\]

Now we address the assumption that $A_n$ has full rank with probability one.
Notice that this is usually not true when the $a_{ij}$ have discrete
distribution (such as Bernoulli).
However, we find the following simple trick that makes the assumption
valid for our study.

Instead of the entry $a_{ij}$, consider $a_{ij}' := (1 -\eps^2)^{1/2}
a_{ij} + \eps\xi_0 $ where $\xi_0$ is
uniform on the interval $[-1,1]$ and $\eps$ is very small, say,
$n^{-1000 n}$. It is clear that the matrix~$A_n'$ formed by the
$a_{ij}'$ has full rank with probability one. On the other hand, it is
easy to show that by the Brunn--Minkowski inequality and Hadamard's bound
\[
|\det A_n| = \bigl(\bigl|\det A_n'\bigr|^{1/n}
\pm O\bigl(n^{-500}\bigr)\bigr)^n .
\]
Furthermore, by Lemma \ref{lemmalower}, $|\det A_n| \ge n^{-Cn}$ with
probability $1-n^{-1}$, and so we can conclude as in the previous argument.
\end{appendix}
%



\printaddresses

\end{document}